\DeclareSymbolFontAlphabet{\mathcal}{symbols}
\theoremstyle{plain}
\newtheorem{theoremv}{Theorem}
\newtheorem{proposition}{Proposition} 
\newtheorem{lemma}[proposition]{Lemma}
\newtheorem{corollary}[proposition]{Corollary}
\theoremstyle{definition}
\newtheorem{definition}[proposition]{Definition}
\theoremstyle{remark}
\newcommand{\secref}[1]{Section~\ref{#1}}
\newcommand{\propref}[1]{Proposition~\ref{#1}}
\newcommand{\lemref}[1]{Lemma~\ref{#1}}
\def\cA{{\mathcal A}}
\def\cB{{\mathcal B}}
\def\cG{{\mathcal G}}
\def\cK{{\mathcal K}}
\def\cL{{\mathcal L}}
\def\cP{{\mathcal P}}
\def\cT{{\mathcal T}}
\def\L{\mathbb{L}}
\def\Q{\mathbb{Q}}
\def\R{\mathbb{R}}
\def\hL{{\widehat{\mathbb L}}}
\def\ad{{\rm ad}}
 \newcommand{\dgl }{\text{\rm DGL}}
  \newcommand{\cdgl}{\text{\rm cDGL}}
 \newcommand{\MC}{\operatorname{{\rm MC}}}
\title[Maurer-Cartan elements in Lie models]{Maurer-Cartan elements in the {L}ie models of finite simplicial complexes}
\author{Urtzi Buijs}
\address{Departamento de Algebra, Geometr\'ia y Topolog\'ia\\
         Universidad de M\'alaga\\
        Ap. 59\\
         29080-M\'alaga,\\
         Espa\~na}
\email{ubuijs@uma.es}
\author{Yves F\'elix}
\address{Institut de Math\'ematiques et Physique\\
         Universit\'e Catholique de Louvain-la-Neuve\\
         Louvain-la-Neuve\\
         Belgique}
\email{Yves.felix@uclouvain.be}
\author{Aniceto Murillo}
\address{Departamento de Algebra, Geometr\'ia y Topolog\'ia\\
         Universidad de M\'alaga\\
        Ap. 59\\
         29080-M\'alaga,\\
         Espa\~na}
         \email{aniceto@uma.es}
\author{Daniel Tanr\'e}
\address{D\'epartement de Math\'ematiques, UMR 8524\\
         Universit\'e de Lille~1\\
         59655 Villeneuve d'Ascq Cedex\\
         France}
\email{Daniel.Tanre@univ-lille1.fr}
\thanks{The first author has been partially supported by the Ram\'on y Cajal MINECO programme. 
The first and third authors have been partially supported by the Junta de Andaluc\'\i a grant FQM-213. 
The fourth author has been partially supported by the  ANR-11-LABX-0007-01  ``CEMPI''.
The authors are partially supported by the MINECO grant MTM2013-{41768-P}}
\begin{document}
  
\date{\today}

\begin{abstract}
In a previous work, we have associated a complete differential graded Lie algebra 
to any finite simplicial complex in a functorial way. 
Similarly, we have also a realization functor from the category of complete differential graded Lie algebras
to the category of simplicial sets.
We have already interpreted the homology of a Lie algebra
in terms of homotopy groups of its realization.
In this paper, we begin a dictionary between models 
and simplicial complexes by establishing a correspondence 
between the Deligne groupoid of the model and the connected components of the finite simplicial complex.
\end{abstract}

\maketitle

Let $\MC(L)$ be the set of Maurer-Cartan elements of 
a differential graded Lie algebra $(L,d)$ over $\Q$ (henceforth $\dgl$).
The group $L_{0}$ of elements of degree~0, endowed with the Baker-Campbell-Hausdorff product, acts on $\MC(L)$ by
$$x\cG z= e^{\ad_{x}}(z)-\frac{e^{\ad_{x}}-1}{\ad_{x}}(d x),
$$
with $x\in L_{0}$ and $z\in\MC(L)$.
We denote by $\widetilde{\MC}(L)$ the orbit space for this action.

\smallskip
In \cite{BFMT1}, we construct a functor $\cL$ from the category of finite simplicial complexes to the category of
complete differential graded Lie algebras (henceforth $\cdgl$), $X\mapsto \cL_{X}$. 
Rational homotopy has been mainly  introduced and used for simply connected spaces
(\cite{FHT1}, \cite{Qu}, \cite{Su}). 
In \cite{Su}, there is also an extension to  non-simply connected spaces over $\R$ via fiber bundles (see
\cite{GHT} for an adaptation to $\Q$). Recently, the classical approach has been extended
to non-simply connected spaces in  \cite{FHT2} and the
functor ${\cL}$ gives the corresponding extension for DGL's.

\medskip
In this paper we prove the following relation between ${\cL}_X$ and the topology of $X$.

\begin{theoremv}
For any finite simplicial complex $X$ there is a bijection
$$\pi_0(X_+) \cong \widetilde{\MC} ({\cL}_X),$$
where $X_{+}=X\sqcup \{\ast\}$.
\end{theoremv}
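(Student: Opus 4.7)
The plan is to define $\Phi: \pi_0(X_+)\to\widetilde{\MC}(\cL_X)$ by sending the disjoint basepoint of $X_+$ to the class of the zero Maurer--Cartan element and each vertex $v\in X$ to the class $[a_v]$ of the generator of $\cL_X$ corresponding to $v$ in the construction of \cite{BFMT1}. By construction each $a_v$ is Maurer--Cartan. For well-definedness on path components, note that each $1$-simplex $\sigma=[v,w]$ contributes a generator $a_\sigma\in(\cL_X)_0$ whose differential is set so that $a_\sigma \cG a_v = a_w$ (this is essentially the Lawrence--Sullivan interval joining $a_v$ to $a_w$). Thus any edge of $X$ produces a gauge equivalence between the MC elements of its endpoints, and transitivity makes $\Phi$ descend to $\pi_0(X_+)$.

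For injectivity, I would apply functoriality of $\cL$ to the collapse map $q:X\to\pi_0(X)$, where $\pi_0(X)$ is regarded as a $0$-dimensional simplicial complex. Since $\pi_0(X)$ has no simplices of positive dimension, $\cL_{\pi_0(X)}$ is a free cdgl generated in degree $-1$; in particular $(\cL_{\pi_0(X)})_0=0$, so the gauge action is trivial. A direct calculation of the MC equation on $z=\sum c_{[v]} a_{[v]}$ in this free cdgl produces a sum of pairwise independent brackets $[a_{[v]},a_{[w]}]$ and $[a_{[v]},a_{[v]}]$ whose coefficients must all vanish, forcing $c_{[v]}\in\{0,1\}$ and $c_{[v]}c_{[w]}=0$ whenever $[v]\ne[w]$. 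Hence $\widetilde{\MC}(\cL_{\pi_0(X)})\cong\pi_0(X)_+$, and the composition of $\Phi$ with the induced map $\widetilde{\MC}(\cL_X)\to\widetilde{\MC}(\cL_{\pi_0(X)})$ is the identity on $\pi_0(X_+)$. This yields injectivity.

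The main task is surjectivity, which I plan to handle by induction on the skeletal filtration of $X$. The base case $X=X^{(0)}$ is precisely the computation above. Attaching a $1$-cell fuses exactly two orbits on both sides, as already observed. For $n\ge 2$, attaching an $n$-simplex $\sigma$ introduces a new generator $a_\sigma$ of degree $n-1\ge 1$; this adds no element in degree $-1$ directly, but iterated brackets involving $a_\sigma$ can enrich $(\cL_X)_0$ and thus, a priori, the gauge action. The main obstacle is to prove that these new degree-zero elements neither identify previously distinct orbits nor create new MC classes, i.e.\ that $\cL_{X^{(n-1)}}\hookrightarrow\cL_X$ induces a bijection on $\widetilde{\MC}$ for $n\ge 2$. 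The strategy is to filter $\cL_X$ by bracket length so that any putative new MC element, or gauge equivalence between old MC elements, produces at its lowest filtration stage a relation in a finitely generated free Lie algebra; this relation can then be ruled out by direct inspection of $da_\sigma$ together with a standard Maurer--Cartan obstruction argument.
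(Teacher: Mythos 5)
Your injectivity step is fine and even a pleasant alternative to what the paper leaves implicit: collapsing to $\pi_0(X)$ and computing $\MC$ of a completed free Lie algebra on degree $-1$ generators (where the degree $0$ part vanishes, so gauge is trivial) does separate the classes $[0],[a_{v}]$, granted the functoriality of $\cL$ for the non-injective simplicial map $X\to\pi_0(X)$, which this paper only quotes for inclusions of subcomplexes and which you should justify from \cite{BFMT1}.

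The genuine gap is surjectivity, which is the heart of the theorem and in your proposal remains a plan rather than a proof. A Maurer--Cartan element of $\cL_X$ is an arbitrary convergent series in the completed free Lie algebra, involving all generators, not just vertices; even your $1$-dimensional step (``attaching a $1$-cell fuses exactly two orbits'') is not ``already observed'': what was observed is only that the two endpoints become equivalent, whereas showing that no new classes appear is precisely the content of \cite{BFMT2} for the Lawrence--Sullivan interval and of \lemref{lem:onedc}/\propref{prop:cscmodel} here. Your claim that a putative new MC element or new equivalence ``produces at its lowest filtration stage a relation \dots ruled out by direct inspection of $da_\sigma$'' does not work as stated: one cannot exclude an MC element by its lowest-order term; one must gauge it inductively into normal form, and this requires (i) a vanishing statement in degree $-1$ for an appropriate complex, which the paper obtains only after the structural decomposition of \propref{prop:cscmodel} (splitting off the acyclic factors $\hL(u_i,v_i)$, invoking \corref{cor:a}, and using the operator $\theta=-\ad_a-d_1$ with $H_{-1}(E_{\geq 1},\theta)=0$, together with $H(\cL_X)=0$ for the class of $0$), and (ii) a convergence argument showing that the resulting infinite sequence of gauge equivalences of increasing order composes, via Baker--Campbell--Hausdorff products in the complete Lie algebra, to a single gauge equivalence (\lemref{lem:seqMC}). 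Neither ingredient appears in your sketch, and without them the inductive step over the skeleta (including the assertion that $\cL_{X^{(n-1)}}\hookrightarrow\cL_X$ induces a bijection on $\widetilde{\MC}$ for $n\geq 2$) is unsupported.
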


The case of the interval $X=[0,1]$ was solved in \cite{BFMT2}.
In \secref{sec:MC}, we make the necessary recalls on Maurer-Cartan elements and the functor $\cL$. \secref{sec:LX}
is devoted to a decomposition  of $\cL_{X}$ when $X$ is connected. Finally, the proof of
the Theorem is done in \secref{sec:MCLX}.

\section{Functor $\cL$ and Maurer-Cartan elements}\label{sec:MC}

Recall that a \dgl~$(L,d)$ is \emph{complete} if $L=\varprojlim_n L/L^{[n]}$ where $L^{[n]}$ denotes the sequence of ideals defined by
$$L^{[1]}= L\,, \hspace{5mm}\text{and } L^{[n+1]} = [L, L^{[n]}]\,, \hspace{2mm} n\geq 2\,.$$ 
When $V$ is finite dimensional,  
$\widehat{\L}(V)=\lim_{n}\,\L(V)/\L(V)^{[n]}$ 
is the completion of the free graded Lie algebra $\L(V)$. 

\medskip
Let $(L,d)$ be a \cdgl. An element $u\in L_{-1}$ is a \emph{Maurer-Cartan element} if
$$du = -\frac{1}{2} [u,u]\,.$$
In \cite{LS}, R. Lawrence and D. Sullivan construct a \cdgl~${\cL}_I$ that is, in a sense that we will precise later, a model for the interval $I=[0,1]$. More precisely,
$${\cL}_I= (\widehat{\mathbb L}(a,b,x), d)\,,$$
where $a$ and $b$ are Maurer-Cartan elements and $x$ is an element of degree $0$ with
$$dx= \ad_xb+ \frac{\ad_x}{e^{\ad_x}-1}(b-a) = [x,b] + \sum_{n=0}^\infty \frac{B_n}{n!} \ad_x^n(b-a)\,.$$
Here the $B_n$ are the well known Bernoulli numbers. This model has been described in detail in \cite{PaTa}, \cite{BM}. 

In a \cdgl~$(L,d)$, two Maurer-Cartan elements $u_1$ and $u_2$ are \emph{equivalent} if they are in the same orbit for the gauge action. By construction, this is equivalent to the existence of  a morphism of \dgl's,
$$f \colon {\cL}_I \to (L,d)$$
with $f(a) = u_1$ and $f(b)= u_2$. The map $f $ is  called \emph{a path from $u_1$ to $u_2$.} The set of equivalence classes of Maurer-Cartan elements is denoted $\widetilde{MC}(L)$. 

Our purpose is the determination of $\widetilde{MC}(L)$ for a family of \cdgl's directly related to topology. 
In fact the \cdgl~${\cL}_I$ is the first example of a Lie model for a general simplicial complex. 
More generally,  there is a functor ${\cL}$, unique up to isomorphism, $X\mapsto {\cL}_X$,  
from the category of finite simplicial complexes to the category of \cdgl's. As any finite simplicial complex is
a subcomplex of some $\Delta^n$, it is sufficient to construct the models, ${\cL}_{\Delta^n}$, of the $\Delta^n$'s.

\begin{proposition} \label{unicity}{\cite[Theorem 2.8]{BFMT1}}  
 The \cdgl~${\cL}_{\Delta^n}$ is defined, up to isomorphism, by the following properties.
 \begin{enumerate}[(i)]
 \item  The \cdgl's ${\cL}_{\Delta^n}$ are natural with respect to the injections of the subcomplexes $\Delta^p$, for all $p<n$.
 \item For $n=0$, we have ${\cL}_{\Delta^0}= (\hL(a), d)$ where $a$ is a Maurer-Cartan element.
 \item The linear part $d_1$ of the differential of $\cL_{\Delta^n}$ is the desuspension of the differential $\delta$ of the chain complex $C_*(\Delta^n)$.
 \end{enumerate}
 \end{proposition}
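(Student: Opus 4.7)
My plan is to prove the characterization by induction on $n$. The base case $n=0$ is property (ii) itself. For the inductive step, assume $\cL_{\Delta^p}$ is uniquely determined up to isomorphism for all $p<n$, together with the face inclusions among them. Property (i) then forces these inclusions to be fixed cdgl morphisms, so that assembling them along shared subfaces produces the sub-cdgl $\cL_{\partial\Delta^n}\subset\cL_{\Delta^n}$, which is thereby determined.

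Property (iii) identifies the underlying graded vector space of generators of $\cL_{\Delta^n}$ with $s^{-1}C_*(\Delta^n)$, so I pass from $\cL_{\partial\Delta^n}$ to $\cL_{\Delta^n}$ by adjoining a single generator $a_n$ of degree $n-1$ corresponding to the top simplex and extending the differential to it. Property (iii) also prescribes the linear part $d_1 a_n=\sum_{i=0}^n (-1)^i a_{d_i\Delta^n}$; what remains is to determine the higher-order (bracket length $\geq 2$) part of $da_n$, which a priori may involve $a_n$ itself, as already happens for $\cL_{\Delta^1}$ via the Lawrence--Sullivan formula.

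The idea is to solve $d^2 a_n=0$ order by order in bracket length. Write $da_n=\sum_{k\geq 1}\omega_k$ with $\omega_k$ of bracket length exactly $k$, so $\omega_1=d_1 a_n$ is fixed; equating terms of bracket length $k-1$ in $d^2 a_n=0$ yields an equation of the form $d_1\omega_k=R_k(\omega_1,\dots,\omega_{k-1})$. A standard Jacobi calculation (using that $d^2=0$ already holds through order $k-1$) shows the right-hand side is a $d_1$-cycle, so existence of $\omega_k$ at each stage, and its uniqueness up to a $d_1$-cycle, reduce to vanishing of the corresponding cohomology of the linear complex underlying $\hL(V)$ equipped with $d_1$. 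Uniqueness of the resulting cdgl up to isomorphism is then produced by an automorphism of $\hL(V)$ that fixes the generators of $\cL_{\partial\Delta^n}$ and modifies $a_n$ by an element of the appropriate degree, absorbing the ambiguity at each order.

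The step I expect to be the main obstacle is the verification of these vanishing statements. Here the geometry of the boundary enters: $\cL_{\partial\Delta^n}$ is a Lie model of $\partial\Delta^n\simeq S^{n-1}$, and the interpretation of $H_*(\cL_X)$ in terms of rational homotopy groups developed in \cite{BFMT1} reduces the question to the well-known fact that $\pi_*(S^{n-1})\otimes\Q$ is concentrated in degrees $n-1$ (and $2n-3$ when $n-1$ is even). The relevant obstruction classes fall outside this range for $n$ sufficiently large, giving both existence and uniqueness; the small cases $n=1$ (where $\partial\Delta^1=S^0$ is disconnected and the explicit Lawrence--Sullivan formula for $\cL_I$ recalled above settles the point) and $n=2$ (where $\partial\Delta^2\simeq S^1$ has rational homotopy concentrated in degree $1$) are checked directly.
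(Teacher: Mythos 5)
This proposition is not proved in the paper: it is quoted from \cite[Theorem 2.8]{BFMT1}, and the argument there is essentially the inductive scheme you describe (adjoin the top generator $a_n$ to $\cL_{\partial\Delta^n}$ and solve $d^2a_n=0$ by induction on bracket length). Your setup up to and including the identification of the obstruction problem is sound. The gap is in your final paragraph, where the vanishing of the obstruction groups is attributed to $\pi_*(S^{n-1})\otimes\Q$.

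The groups that control your induction are, at bracket length $m\ge 2$, the obstruction group $H_{n-3}(\L^m(V),d_1)$ and the indeterminacy group $H_{n-2}(\L^m(V),d_1)$, where $V=s^{-1}C_*(\Delta^n)$ (the full simplex: $da_n$ may involve $a_n$ itself, as in the Lawrence--Sullivan case). These are computed by pure linear algebra, with no rational homotopy input: over $\Q$ the functor $\L^m$ commutes with homology, and $(V,d_1)\cong s^{-1}(C_*(\Delta^n),\delta)$ has homology $\Q$ concentrated in degree $-1$ because $\Delta^n$ is contractible; hence $H(\hL(V),d_1)\cong\hL(\Q a)=\Q a\oplus\Q[a,a]$ sits in degrees $-1$ and $-2$ only, so for $n\ge 2$ every obstruction and every indeterminacy vanishes, while $n\le 1$ is covered by (ii) and the explicit interval formula. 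The homotopy groups of $S^{n-1}$ are the wrong tool on two counts. First, they compute $H_*(\cL_{\partial\Delta^n},d_a)$ for the \emph{perturbed} differential, not the homology of the bracket-length associated graded that appears in your order-by-order argument; and even if you reformulate the problem globally in those terms, the uniqueness indeterminacy sits in $H_{n-2}(\cL_{\partial\Delta^n},d_a)\cong\pi_{n-1}(S^{n-1})\otimes\Q=\Q\neq 0$, i.e.\ squarely \emph{inside} the range you claim to avoid, so vanishing fails exactly where you need it and one would have to argue instead that the prescribed linear part pins down the class (a Hurewicz-type statement you do not supply). Second, the identification of $H_*(\cL_Y,d_a)$ with rational homotopy groups is proved in \cite{BFMT1} only after, and by means of, the construction of the family $\cL_{\Delta^\bullet}$, so invoking it here is circular. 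Replacing your last paragraph by the direct computation of $H(\L^m(V),d_1)$ closes the argument.
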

 In the case  $\Delta^1=[0,1]$, we recover the Lawrence-Sullivan construction. 
 For each finite simplicial complex, $X$, contained in $\Delta^n$, the Lie subalgebra $\widehat\L(s^{-1}C_*(X))$ is preserved by the differential of $\cL_{\Delta^n}$ and gives a \emph{model} $\cL_X$ of $X$.

When $a$ is a Maurer-Cartan element in ${\cL}_X$, we denote by $d_a$ the perturbed differential $d_a= d +  \ad_a$. The first properties of 
${\cL}_X=(\widehat{\L}(W),d)$ are contained in the following statements  extracted from \cite{BFMT1} and \cite{BFMT3}.
\begin{enumerate}[(i)]
\item If $d_1$ denotes the  linear part   of the differential $d$, then $(W, d_1)$ is isomorphic  to the desuspension 
of the simplicial chain complex $C_*(X)$ of $X$.
\item If $f \colon X\to Y$ is the inclusion of a subcomplex, then ${\cL}_f \colon {\cL}_X\to {\cL}_Y$ is equal to $\widehat{\L}({s^{-1}C_*(f)})$.
\item $H({\cL}_X) = 0$ (\cite{BFMT3}, Theorem 4.1).
\item If $X$ is simply connected, and $a$ is the Maurer-Cartan element associated to a $0$-simplex, then
 $(\widehat{\L}(W),d_a)$ is quasi-isomorphic to the usual rational Quillen model of $X$   
 (\cite{BFMT1}, Theorem 7.4(ii)).
\item If $X$ is connected and $a$ is the Maurer-Cartan element associated to a $0$-simplex, then 
$H_0(\widehat{\L}(W),d_a)$ is isomorphic to the Malcev Completion of $\pi_1(X)$ (\cite{BFMT1}, Theorem 9.1).
\end{enumerate}

 Recall  that the Lawrence-Sullivan interval ${\cL}_I$ is isomorphic to the cylinder construction
 (\cite{Ta}) on a Maurer-Cartan element (\cite[Theorem 6.3]{BFMT3}). More precisely,  
 consider the \cdgl~$(\widehat{\L}(a,c,y),d)$ with $\vert y\vert = 0$, $\vert c\vert = -1$, 
 $da= -\frac{1}{2}[a,a]$, $dy= c $ and $dc= 0$ 
 that we equip  with a derivation $s$ of degree $+1$,  defined by $s(a)= y$, $s(c)=s(y)= 0$. 
 Then the morphism
\begin{equation}\label{equa:isoLS}
\psi \colon (\widehat{\L}(a,b,x),d) \to (\widehat{\L}(a,c,y),d)
\end{equation}
defined by $\psi (a)= a$, $\psi (b) = e^{sd+ds}(a)$, $\psi (x)= y$ is an isomorphism of \dgl's.
In particular,
$$\psi (b) = a+c+ \sum_{n\geq 1} \frac{(sd)^n}{n!} (a)= e^{\ad_{-y}}(a) + \frac{e^{\ad_{-y}}-1}{\ad_{-y}} (c)\,.$$

\begin{definition}\label{def:MCr}
 Two Maurer-Cartan elements $u,v$ in a \cdgl$~(\widehat{\L}(V),d)$ are called \emph{equivalent of order $r$} if there is a morphism 
$$\varphi \colon (\widehat{\L}(a,b,x),d) \to (\widehat{\L}(V),d)$$
with $\varphi (x)\in \L^{\geq r}(V)$, $\varphi (a) = u$ and $\varphi (b)= v$. We denote this relation by
$u\sim_{O(r)} v$.
\end{definition}

This relation is a key-point in the proof of \propref{prop:cscmodel}. 
We end this section with two properties of $\sim_{O(r)}$.

\begin{lemma}\label{lem:MCr}
  Let   $u$ be a Maurer-Cartan element in $(\widehat{\L}(V),d) $. We suppose  $u = v+ w$ 
  with $w\in \L^{\geq r} (V)$, and the existence of  an element $z\in \L^{\geq r}(V)$ with $dz=w+t$ and
   $t\in \L^{\geq r+1}(V)$.
   Then, we have  $u\sim_{O(r)} v+w' $ with $w'\in \mathbb L^{\geq r+1}(V)$.
\end{lemma}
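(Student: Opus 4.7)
The plan is to realize the desired equivalence of order $r$ by a gauge transformation whose generator is precisely the given element $z$. More concretely, I would define a morphism
$$\varphi \colon (\widehat{\L}(a,b,x),d) \to (\widehat{\L}(V),d)$$
by setting $\varphi(a) = u$, $\varphi(x) = z$, and letting $\varphi(b)$ be forced by the Lawrence--Sullivan relation $dx = [x,b] + \sum_{n\geq 0}\frac{B_n}{n!}\ad_x^n(b-a)$. As recalled around the isomorphism $\psi$ in \eqref{equa:isoLS}, this equation can be inverted to express $b$ as a gauge translate of $a$, so that
$$\varphi(b) \;=\; z\, \cG\, u \;=\; e^{\ad_z}(u) - \frac{e^{\ad_z}-1}{\ad_z}(dz).$$
The standard fact that $\varphi$ is then a well-defined $\dgl$ morphism as soon as $u\in\MC(\widehat{\L}(V))$ is the whole content of the Lawrence--Sullivan construction and should be invoked rather than reproved.

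The next step is a filtration estimate. Since every element of $\widehat{\L}(V)$ lies in $\L^{\geq 1}(V)$ and $z\in \L^{\geq r}(V)$, one has $\ad_z^k(y)\in \L^{\geq r+1}(V)$ for all $k\geq 1$ and all $y\in \widehat{\L}(V)$. Applying this to both $u$ and $dz$ yields
$$e^{\ad_z}(u) \equiv u \pmod{\L^{\geq r+1}(V)}, \qquad \frac{e^{\ad_z}-1}{\ad_z}(dz) \equiv dz \pmod{\L^{\geq r+1}(V)},$$
whence $\varphi(b) \equiv u - dz \pmod{\L^{\geq r+1}(V)}$.

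Finally, I would substitute the two given decompositions $u = v+w$ and $dz = w+t$. The $w$'s cancel and we are left with
$$\varphi(b) \;\equiv\; v - t \;\equiv\; v \pmod{\L^{\geq r+1}(V)},$$
since $t\in \L^{\geq r+1}(V)$ by hypothesis. Setting $w' := \varphi(b) - v \in \L^{\geq r+1}(V)$, the morphism $\varphi$ satisfies $\varphi(a)=u$, $\varphi(b)=v+w'$ and $\varphi(x)=z\in \L^{\geq r}(V)$, which is exactly the definition of $u \sim_{O(r)} v+w'$.

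There is no real obstacle here beyond bookkeeping; the only point that requires care is the assertion that the assignment $a\mapsto u$, $x\mapsto z$ canonically extends to a $\dgl$ morphism out of $\cL_I$ with $\varphi(b)$ given by the gauge formula. Once this is taken for granted from \cite{LS} (or the cylinder description \eqref{equa:isoLS}), the proof reduces to the two-line filtration computation above.
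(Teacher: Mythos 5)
Your proof is essentially the paper's own argument: the paper also realizes the equivalence by the gauge transformation generated by $z$, except that it builds the path explicitly by mapping the cylinder $(\widehat{\L}(a,c,y),d)$ to $(\widehat{\L}(V),d)$ via $f(a)=u$, $f(y)=-z$, $f(c)=-dz$ and composing with the isomorphism $\psi$ of (\ref{equa:isoLS}), then makes exactly your filtration estimate $f\psi(b)\in u-dz+\L^{\geq r+1}(V)=v-t+\L^{\geq r+1}(V)$. One sign correction to the step you flag as the crux: with the paper's conventions one has $b=(-x)\cG a$ in $\cL_I$, not $x\cG a$ (compare linear parts: $d_1x=b-a$ in $\cL_I$, whereas $x\cG a\equiv a-dx$ modulo brackets), so the morphism whose $b$-image is $z\cG u\equiv u-dz$ is the one with $\varphi(x)=-z$; taking $\varphi(x)=z$ would instead force $\varphi(b)=(-z)\cG u\equiv u+dz$, which does not cancel $w$. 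Since $-z\in\L^{\geq r}(V)$ as well, replacing $\varphi(x)=z$ by $\varphi(x)=-z$ repairs this without any other change, and your conclusion stands.
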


\begin{proof}
Let $f \colon (\widehat{\L}(a,c,y),d)\to (\widehat{\L}(V),d)$ be the morphism defined by $f(a)= u$, $f(y)= -z$ 
and $f(c) =  -dz$. Then $f\circ \psi$ is a path in $(\widehat{\L}(V),d)$ 
with $f\psi (a)= u$, $f\psi (x)= -z$. 
To determine $f\psi(b)$, we first observe that
$$\psi (b) = a+c+ \sum_{n\geq 1} \frac{(sd)^n}{n!} (a)\,.$$ 
Remark also that $f(sd)^n(a)\in \L^{\geq r+1}(V)$, for $n\geq 1$.
Therefore 
$$f\circ \psi (b) \in f(a) + f(c) + \L^{\geq r+1}(V)
= u-dz+ \L^{\geq r+1}(V)=v-t+ \L^{\geq r+1}(V),$$
with $t\in \L^{\geq r+1}(V)$.
\end{proof}
 
\begin{lemma}\label{lem:seqMC}
   Let $(u_r)_{r\geq n_0}$
   be a sequence of Maurer-Cartan elements in $(\widehat{\L}(V),d)$ such that $u_r = z+ v_r $ with   $v_r\in \L^{\geq r}(V)$. 
   If $u_r\sim_{O(r)} u_{r+1}$ for each $r\geq n_0$,   then  we have $u_{n_0}\sim_{O(n_0)} z $.
\end{lemma}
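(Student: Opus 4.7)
The plan is to interpret each equivalence $u_{r}\sim_{O(r)} u_{r+1}$ as a gauge transformation and then to concatenate these transformations via the Baker--Campbell--Hausdorff product, exploiting the completeness of $\widehat{\L}(V)$.

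By \defref{def:MCr}, for each $r\geq n_{0}$ there is a morphism $\varphi_{r}\colon(\widehat{\L}(a,b,x),d)\to(\widehat{\L}(V),d)$ with $\varphi_{r}(a)=u_{r}$, $\varphi_{r}(b)=u_{r+1}$, and $\xi_{r}:=\varphi_{r}(x)\in \L^{\geq r}(V)\cap L_{0}$. By the discussion at the start of the section, the existence of such a morphism is equivalent to the gauge identity $u_{r+1}=\xi_{r}\cG u_{r}$. Since $(L_{0},\bak)$ acts on $\MC(\widehat{\L}(V))$ on the left, iteration produces
\[
u_{r+1}=X_{r}\cG u_{n_{0}},\qquad X_{r}:=\xi_{r}\bak \xi_{r-1}\bak\cdots\bak \xi_{n_{0}}.
\]
Every nonlinear term of the BCH series is an iterated bracket, and $\xi_{k}\in \L^{\geq k}(V)$, so $X_{r+1}-X_{r}\in \L^{\geq r+1}(V)$. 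Consequently $(X_{r})$ is a Cauchy sequence in the complete Lie algebra $\widehat{\L}(V)$ and converges to some degree-$0$ element $X_{\infty}\in \widehat{\L}^{\geq n_{0}}(V)$.

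To conclude I would invoke continuity. The gauge action $x\cG z=e^{\ad_{x}}(z)-\frac{e^{\ad_{x}}-1}{\ad_{x}}(dx)$ depends continuously on $x$ with respect to the bracket-length filtration, so $X_{r}\cG u_{n_{0}}\to X_{\infty}\cG u_{n_{0}}$. On the other hand, the hypothesis $u_{r+1}=z+v_{r+1}$ with $v_{r+1}\in \L^{\geq r+1}(V)$ gives $u_{r+1}\to z$ in the completion, so $z=X_{\infty}\cG u_{n_{0}}$. The Lawrence--Sullivan morphism $\varphi\colon(\widehat{\L}(a,b,x),d)\to(\widehat{\L}(V),d)$ associated with this gauge identity then satisfies $\varphi(a)=u_{n_{0}}$, $\varphi(b)=z$ and $\varphi(x)=X_{\infty}\in \L^{\geq n_{0}}(V)$, witnessing $u_{n_{0}}\sim_{O(n_{0})} z$.

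The main delicate point is the convergence step: one must check that the infinite BCH product and the exponential-type sums occurring in the gauge action are genuinely summable in $\widehat{\L}(V)$, and that the limit $X_{\infty}$ produces an honest morphism from the Lawrence--Sullivan interval rather than only a formal gauge identity. Both issues are controlled by the uniform estimate $X_{r+1}-X_{r}\in \L^{\geq r+1}(V)$, which in turn is a direct consequence of the order-of-contact hypothesis on the $\xi_{r}$.
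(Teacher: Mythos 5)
Your argument is correct and follows essentially the same route as the paper: both concatenate the gauge elements $\varphi_r(x)$ into an infinite Baker--Campbell--Hausdorff product, use completeness for its convergence, and identify the limit as a path of order $n_0$ from $u_{n_0}$ to $z$ (the paper verifies the Lawrence--Sullivan flow equation modulo each $\L^{\geq r}(V)$, while you pass to the limit in the gauge identity by filtration-continuity). The only point needing care is a sign convention: with the action as defined in the introduction, a path $\varphi_r$ yields $u_{r+1}=(-\varphi_r(x))\cG u_r$ rather than $\varphi_r(x)\cG u_r$, which affects none of the filtration bookkeeping.
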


\begin{proof} By hypothesis, for $r\geq n_0$ there is a morphism
$$\varphi_r \colon (\widehat{\L}(a,b,x),d)\to (\widehat{\L}(V),d)
$$
with $\varphi_r(a) = u_r$, $\varphi_r(b) = u_{r+1}$ and $\varphi_r(x)\in \mathbb L^{\geq r}(V)$. 
For $r>n_0$, we define $w_r$ to be the Baker-Campbell-Hausdorff product 
$$w_r = \varphi_{n_0}(x)* \varphi_{n_0+1}(x)*\dots * \varphi_{r-1}(x)\,.$$
 From the associativity established in \cite{LS}, the element $w_r$ is a path from $u_{n_0}$ to $u_r$. 
We form the infinite product 
$$w= \varphi_{n_0}(x)* \varphi_{n_0+1}(x)*\dots $$
which is   well defined in $\widehat{\L}(V)$ as the limit of the $w_r$.  
Now we claim that the element $w$ is a path of order $n_0$ from $u_{n_0}$ to   $z$; i.e., we have
$u_{n_0}\sim_{O(n_0)} z$.
Consider   the element $$y = dw - [w,z] - \sum_{n\geq 0} \frac{B_n}{n!} \ad_w^{n} (z- u_{n_0})\,,$$ 
where the $B_n$ are the Bernoulli numbers.
The element $y$ has the same image in $\L(V)/ \L^{\geq {r}}(V)$   than  
$$dw_r -[w_r, u_r] - \sum_{n\geq 0} \frac{B_n}{n!} \ad_{w_r}^n (u_r-u_{n_0}).$$
This last expression is equal to $0$ because $w_r$ is a path from $u_{n_0}$ to $u_r$. This implies  $y= 0$ and  proves the result.
\end{proof}

\section{Model of a finite connected simplicial complex}\label{sec:LX}

\begin{proposition}\label{prop:cscmodel}

 Let $X$ be a  connected finite simplicial complex of dimension $n$, then we have an isomorphism
 of \cdgl's
$${\cL}_X \cong (\widehat{\L}(V),d) \,\widehat{\amalg}_{i} \,(\widehat{\L}(u_i, v_i),d)$$
where $dv_i = u_i$,  $du_{i}=0$, $V=V_{\leq n-1}$, $V= \Q a \oplus V_{\geq 0}$, $a$ is a Maurer-Cartan element
and $\widehat\amalg$ denotes the completion of the coproduct. Moreover,
the differential of any $x\in V_{\geq 0}$ verifies
$$dx+[a,x]\in \hL^{\geq 2}(V_{\geq 0}).$$ 
\end{proposition}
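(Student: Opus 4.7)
The plan is to execute the proof in two phases: first a linear-algebraic splitting of the simplicial chain complex, then an inductive lifting to the full Lie algebra along the bracket-length filtration.

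First I would set up the linear decomposition. Since $\cL_X = (\hL(W),d)$ with $W = s^{-1}C_*(X)$ by property (i), and every chain complex over $\Q$ splits as a direct sum of a ``homology complement'' subcomplex $H_*$ (satisfying $\partial = 0$) and an acyclic complement $A_*$ which further decomposes into elementary pairs $(\bar u_i, \bar v_i)$ with $\partial \bar v_i = \bar u_i$, we may write $C_*(X) = H_* \oplus A_*$. Because $X$ is connected, $H_0(X) = \Q$; choose a $0$-simplex $a_0$ whose class generates $H_0$ and let $a := s^{-1}a_0 \in W_{-1}$. The remaining $0$-simplices pair with edges of a spanning tree of the $1$-skeleton, and in higher dimensions the splitting is standard. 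After desuspension, we obtain a graded-vector-space decomposition $W = \Q a \oplus V_{\geq 0} \oplus \mathrm{span}\{u_i^0, v_i^0\}$ where $V_{\geq 0} = s^{-1}H_{\geq 1}(X)$ is concentrated in degrees $0$ through $n-1$.

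Next I would verify the Maurer-Cartan property of $a$ and promote the linear acyclic pairs to exact ones. Applying Proposition \ref{unicity}(ii) to the subcomplex $\{a_0\} \hookrightarrow X$ together with naturality (i), $a$ is automatically Maurer-Cartan in $\cL_X$. For the acyclic pairs, since $d_1 v_i^0 = u_i^0$, the element $dv_i^0$ has linear part $u_i^0$ and higher-bracket tail in $\hL^{\geq 2}(W)$; inductively, this tail can be absorbed by altering $v_i^0$ through higher-bracket corrections and then setting $u_i := dv_i$ (which satisfies $du_i = 0$ by $d^2 = 0$). The iteration converges by completeness of $\cL_X$, and one arranges that the $u_i, v_i$ together generate a Lie subalgebra in coproduct with $\hL(V)$.

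The more subtle part is the condition $dx + [a,x] \in \hL^{\geq 2}(V_{\geq 0})$ for $x \in V_{\geq 0}$. Since $a$ is Maurer-Cartan, $d_a = d + \ad_a$ is a differential, and the condition says precisely that $d_a$ sends $V_{\geq 0}$ into $\hL^{\geq 2}(V_{\geq 0})$; i.e., the twisted differential is \emph{minimal} on these generators with no $a$-involving terms. Starting from the linear representatives $x^0 \in V_{\geq 0}$ satisfying $d_1 x^0 = 0$, I would correct each $x^0$ iteratively by bracket-length-$r$ additions, at each stage using property (iii) ($H(\cL_X) = 0$) to find a primitive killing the offending components of $d_a x$ at bracket length $r+1$; cross-terms with the acyclic pairs can be absorbed into simultaneous modifications of the $v_j$'s.

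The main obstacle is the simultaneous bookkeeping across modifications to the generators in $V_{\geq 0}$ and in the acyclic pairs while preserving the Maurer-Cartan identity $da = -\tfrac{1}{2}[a,a]$. Lemmas \ref{lem:MCr} and \ref{lem:seqMC} are the essential tools here: \lemref{lem:MCr} enables a single order-$r$ correction that preserves the Maurer-Cartan property while pushing obstructions one bracket-length deeper, and \lemref{lem:seqMC} guarantees convergence of the infinite composition of such corrections in the complete Lie algebra, producing a genuine gauge-equivalent Maurer-Cartan element in the limit. Assembling these three inductive constructions yields the claimed isomorphism $\cL_X \cong (\hL(V), d)\,\widehat{\amalg}_{i}\, (\hL(u_i, v_i), d)$.
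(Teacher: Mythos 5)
Your linear set-up (splitting $C_*(X)$ over $\Q$ into homology plus elementary pairs, taking $a$ to come from a vertex, making the pairs exact by replacing $u_i^0$ with $dv_i^0$) is unproblematic, but the step that carries essentially all the content of the proposition is not justified: arranging that $dx+[a,x]\in\hL^{\geq 2}(V_{\geq 0})$ for every $x\in V_{\geq 0}$, i.e.\ that no letter $a$ and no $u_i,v_i$ survives in $d_ax$. Your proposed mechanism --- ``use $H(\cL_X)=0$ to find a primitive killing the offending components of $d_ax$ at bracket length $r+1$'' --- does not parse: the offending component of $d_ax$ in a fixed bracket length (or fixed number of letters from $V_{\geq 0}$) is not a cycle, only $d_a^2x=0$ holds globally, so there is no class to kill; and in the appropriate associated graded the complex you would need to be acyclic is \emph{not} acyclic (its homology contains the classes of the words $\ad_a^0(v)=v$; the vanishing $H_{-1}=0$ used in the proof of the Theorem, via $\theta=-\ad_a-d_1$, is special to degree $-1$ and is established there only \emph{after} the normal form of \propref{prop:cscmodel} is available). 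Note also that $H(\cL_X)=0$ holds for disconnected $X$ as well, while the statement you are proving fails there, so connectedness must enter beyond the remark $H_0(X)=\Q$; your sketch never uses it. Two further misdirections: cross-terms of $d_ax$ involving $u_j,v_j$ are removed by correcting $x$ itself, using contractibility of the factor $\hL(u_j,v_j)$; ``modifying the $v_j$'s'' leaves $dx$ untouched. And Lemmas \ref{lem:MCr} and \ref{lem:seqMC} cannot do the bookkeeping you assign to them: they produce gauge equivalences between Maurer-Cartan elements inside a fixed cdgl, not automorphisms of $\hL(W)$ or changes of free generators, whereas the proposition asserts an isomorphism of cdgl's.

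For contrast, the paper's proof supplies exactly the non-formal inputs your argument is missing. It proceeds by induction on $\dim X$. In dimension $1$ (\lemref{lem:onedc}) one chooses a maximal tree, forms Baker--Campbell--Hausdorff products of edges along tree paths and along loops, invokes \cite[Proposition 2.7]{BFMT1} to get $dc_k=-[a,c_k]$ \emph{exactly} for the loop generators, and converts each tree path into an exact pair through the explicit isomorphism $\psi$ of the Lawrence--Sullivan interval with the cylinder; this is precisely how the condition on $V_0$ and the coproduct splitting are obtained simultaneously, something your generic pairing of surplus vertices with tree edges does not give. In the inductive step one writes $X=Y\cup\bigcup_j\Delta^n_j$, uses the cell-attachment description $d_ax_j=\Omega_j$ with $\Omega_j$ chosen in $\hL(W)$ via a quasi-isomorphism, and splits off the new acyclic pairs by killing the ideal generated by $\{x_j,\Omega_j^1\}$ and lifting along the resulting quasi-isomorphism by cofibrancy (\cite[Proposition 2.4]{BFMT1}, \cite[Proposition 5.4]{BFMT3}). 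If you wish to rescue a direct perturbation argument, you must identify and prove the vanishing of the actual obstruction groups for removing the $a$-letters, and that is exactly where the specific structure of $\cL_{\Delta^n}$ and the connectivity of $X$ are used.
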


\begin{proof} By \lemref{lem:onedc}, this is true if $\dim X= 1$. 
Proceed by induction on $n$.
We can therefore suppose that 
$X = Y\cup\cup_{j=1}^k \Delta^n_{j}$ and 
$$({\cL}_Y,d) \cong (\widehat{\L}(V),d) \,\widehat{\amalg}_{i} \,(\widehat{\L}(u_i, v_i),d)$$
with $n\geq 2$, $\dim Y\leq n-1$,  $V=V_{\leq n-2}=\Q a\oplus W$, $W=W_{\geq 0}$, $|v_{i}|\leq n-2$, $dv_{i}=u_{i}$.
We set $u'_i = u_i + [a,v_i]$ and we get an isomorphism of \dgl's
$$(\widehat{\L}(V), d_a) \,\widehat{\amalg}\,\widehat{\amalg}_{i}\,(\widehat{\L}(u_i', v_i),d_a)\to ({\cL}_Y, d_a)\,,$$
with $d_av_i = u_i'$, $d_{a}u_{i}'=0$. 
Now, by construction of the model  $\cL_{X}$, there are cycles $\Omega_{j}\in ({\cL}_Y)_{n-2}$ such that
$$({\cL}_X, d_a) = ({\cL}_Y \,\widehat{\amalg}\,\widehat{\amalg}_{j=1}^k \,\L(x_{j}),d_a)\,,
 \hspace{1cm} \vert x_{j}\vert = n-1\,, \hspace{5mm} 
d_ax_{j}= \Omega_{j}\,.$$
Since the inclusion 
$(\widehat{\L}(V), d_a) \hookrightarrow (\widehat{\L}(V),d_a) \,\widehat{\amalg}\,\widehat{\amalg}_{i}\,
(\widehat{\mathbb L}(u_i',v_i),d_a)$ 
is a quasi-isomorphism, we can choose 
$\Omega_{j}\in \widehat{\L}(W)$.  

Let $(x_{j})_{j\in \cA}$ the family of the $x_{j}$'s such that the differential $dx_{j}=\Omega_{j}$
has a non-zero linear part $\Omega^1_{j}$.
We set $\cB=\{1,\ldots,k\}\backslash \cA$ and denote by $\cK$ the ideal generated by
$\{x_{j},\Omega_{j}^1\mid j\in\cA\}$.
If $V'$ is a direct summand of $\oplus_{j\in\cA}\Q \Omega^1_{j}$ in $V$,
we have an isomorphism $(\hL(V'),d)\cong (\hL(V),d)/\cK$.
From \cite[Proposition 2.4]{BFMT1}, we deduce that the canonical surjection
$\rho\colon (\hL(V),d)\to (\hL(V),d)/\cK$
is a quasi-isomorphism. 
Since the \dgl~$(\hL(V'),d)$ is cofibrant (\cite[Proposition 5.4]{BFMT3}), we may lift $\rho$ in a quasi-isomorphism
$$\varphi\colon (\hL(V'),d)\,\widehat{\amalg}\,\widehat{\amalg}_{j\in\cA}\hL(x_{j},\Omega_{j})\to (\hL(V),d)$$
and get an isomorphism
$$\cL_{X}\cong \hL(V'\oplus\,\oplus_{j\in\cB}\Q x_{j})\,\widehat{\amalg}\,(\widehat{\amalg}_{j\in\cA}\hL(x_{j},\Omega_{j})
\,\widehat{\amalg}_{i}\hL(u_{i},v_{i})).$$
\end{proof}

\begin{lemma}\label{lem:onedc}
 Let $X$ be a 1-dimensional  connected finite simplicial complex, then we have an isomorphism
 of \cdgl's
$${\mathcal L}_X \cong (\widehat{\L}(V),d)\, \widehat{\amalg}\, (\widehat{\L}(u_i,  v_i), dv_i = u_i)\,,$$
with $V = \Q a \oplus V_0$, $da=-\frac{1}{2}[a,a]$ and $dx= -[a,x]$ for any $x\in V_0$.
\end{lemma}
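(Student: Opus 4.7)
The plan is to identify $\cL_X$ explicitly by an iterated Lawrence-Sullivan change of variables along a spanning tree, followed by a normalization of the non-tree edges. By \propref{unicity} and the construction recalled there, $\cL_X$ is freely generated, as a \cdgl, by Maurer-Cartan elements $\{a_i\}_{0\leq i\leq p}$ in degree $-1$ (one per vertex $v_i$), together with elements $\{x_j\}_{1\leq j\leq q}$ in degree $0$ (one per edge $e_j$), where each $dx_j$ is the Lawrence-Sullivan formula for the endpoints of $e_j$. I set $a:=a_0$ and fix a spanning tree $T\subset X$ with edges $e_1,\dots,e_p$, rooted at $v_0$, ordered so that $e_i$ joins some $v_{\pi(i)}$ with $\pi(i)<i$ to $v_i$.

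The first step is the tree reduction. For each $i=1,\dots,p$, within the sub-\cdgl~on $(a_{\pi(i)},a_i,x_i)$, I apply the isomorphism $\psi$ of equation~\eqref{equa:isoLS} to substitute $(a_i,x_i)\leadsto(c_i,y_i)$ with $|y_i|=0$, $|c_i|=-1$, $dy_i=c_i$, $dc_i=0$. Composing all $p$ substitutions yields an isomorphism
$$\cL_X\;\cong\;\hL\bigl(a,\,c_1,y_1,\dots,c_p,y_p,\,x_{p+1},\dots,x_q\bigr),$$
in which the sub-\cdgl~$\cL'_T:=\hL(a,c_1,y_1,\dots,c_p,y_p)$ already splits as the completed coproduct of $(\hL(a),d)$ with the acyclic pairs $(\hL(c_i,y_i),\,dy_i=c_i)$. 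Since $\psi(x)=y$, each $y_i$ is, by construction, a Lawrence-Sullivan path in $\cL'_T$ from $a_{\pi(i)}$ to the transported expression of $a_i$; hence by transitivity along the tree every original vertex-Maurer-Cartan element is gauge-equivalent to $a$ inside $\cL'_T$.

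The second step normalizes each non-tree edge $x_j$ ($j>p$). After the tree reduction, $dx_j$ is the Lawrence-Sullivan formula between two Maurer-Cartan elements $\alpha_j,\beta_j\in\cL'_T$, both gauge-equivalent to $a$. Using BCH products of the $y_i$'s along the tree paths from $v_0$ to the endpoints of $e_j$ (together with the reverse-path construction in the Lawrence-Sullivan interval), I build explicit paths $\eta_j\in(\cL'_T)_0$ from $a$ to $\alpha_j$ and $\mu_j\in(\cL'_T)_0$ from $\beta_j$ to $a$, and replace $x_j$ by the BCH concatenation
$$x'_j\;:=\;\eta_j\ast x_j\ast\mu_j.$$
By the associativity of Lawrence-Sullivan composition invoked in the proof of \lemref{lem:seqMC}, $x'_j$ is a Lawrence-Sullivan path from $a$ to $a$, so its differential reduces to $dx'_j=[x'_j,a]=-[a,x'_j]$. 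Since $x'_j=x_j+(\text{terms in }\cL'_T)$, substituting $x_j\leadsto x'_j$ is a triangular, hence invertible, change of generating set. Setting $V_0:=\bigoplus_{j>p}\Q x'_j$ and $(u_i,v_i):=(c_i,y_i)$ then produces the asserted isomorphism. The main obstacle lies in this normalization: one must check that the BCH concatenations converge in the completion, that Lawrence-Sullivan associativity really yields $dx'_j=-[a,x'_j]$, and that the collection $\{a,c_i,y_i,x'_j\}$ still freely generates $\cL_X$; this last point is ensured by the upper-triangular nature of all substitutions with respect to the natural filtration by tree-distance to $v_0$.
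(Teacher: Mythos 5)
Your proof is correct and follows essentially the same route as the paper: choose a maximal tree, use Baker-Campbell-Hausdorff products of edges to produce paths to the base point and loops at it (whose differential is $-[a,\cdot]$), apply the isomorphism $\psi$ of (\ref{equa:isoLS}), and conclude by checking invertibility on indecomposables. The only real difference is the order of the two steps: the paper first changes generators to the tree-paths $p_i$ and loops $c_k$ and then applies $\psi$ to each interval $(\widehat{\L}(a,a_i,p_i),d)$, whereas you apply $\psi$ edge-by-edge along the tree before normalizing the non-tree edges by conjugation.
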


\begin{proof}
 Let $x_0$ be a vertex of $X$ and let $a$ denote the corresponding Maurer-Cartan element in ${\mathcal L}_X$. 
 By hypothesis $X$ is a connected finite graph, and we denote by $\cT$   a maximal tree in $X$. 
 For each vertex $v_i$ different from $x_0$, there is a unique path $\cP_{v_i}\in\cT$ of minimal length from 
 $x_0$ to $v_i$. 
 We   remark that each edge in $\cT$ is the terminal edge of some path $\cP_{v_i}$ for some vertex 
 $v_i$ different from $x_0$.
 The vertices $v_i$ correspond to Maurer-Cartan elements $a_i$ in ${\cL}_X$. 
 To each path $\cP_{v_i}$ we associate the Baker-Campbell-Hausdorff product $p_i$ of the edges  composing 
 this path.  

If $b_k$ is an edge which does not belong to $\cT$,   we denote by  $v_{k_0}$ and $v_{k_1}$ its endpoints. 
If each of them is different from $x_0$,  we form the loop consisting of the path 
$\cP_{v_{k_0}}$ followed by $b_k$ and $(\cP_{v_{k_1}})^{-1}$. 
If $v_{k_0}= x_0$, we form the loop consisting of $b_k$ and $(\cP_{v_{k_1}})^{-1}$
and do similarly if $v_{k_1}= x_0$.
We denote then by $c_k$ the Baker-Campbell-Hausdorff product of the edges composing this loop.

From these two constructions, we get a morphism of \dgl's
$$f \colon ({\cL}',d) := (\widehat{\L}(a,a_i, p_i, c_k),d) \to {\mathcal L}_X.$$
The map $f$ induces an isomorphism on the indecomposable elements and thus it is an isomorphism.
In $({\cL}',d)$, for each $i$, $(\widehat{\L}(a,a_i, p_i),d)$ is a Lawrence-Sullivan interval connecting $a$ to $a_i$. 
On the other hand (see \cite[Proposition 2.7]{BFMT1}), for each $k$ we have $dc_k = -[a,c_k]$. 

Recall now from (\ref{equa:isoLS}) that for each $i$, there is an isomorphism
$$\psi_i \colon (\widehat{\L}(a,a_i, p_i),d)\to (\widehat{\L}(a, u_i, v_i),d)$$
with $\psi (a) = a$, $\psi(p_{i})=v_{i}$, $du_i=0$ and $dv_i= u_i$. 
The morphisms $\psi_i$ can be pasted together and give an isomorphism
$$\psi \colon ({\cL}',d) \to (\widehat{\L}(a, u_i, v_i, c_k),d)$$
with $dc_k = -[a,c_k]$ and $dv_i = u_i$.
Therefore
$${\cL}_X \cong (\widehat{\L}(V),d)\,\widehat{\amalg} \,(\widehat{\L}(u_i, v_i),d)$$
with $V = \Q a \oplus V_0$ and $dx= -[a,x] $ for any $x\in V_0$.  
\end{proof}

\begin{corollary}\label{cor:a}
 With the notations of \propref{prop:cscmodel}, we have
$$\widetilde{MC}({\cL}_X) = \widetilde{MC}(\widehat{\L}(  V),d)\,.$$
\end{corollary}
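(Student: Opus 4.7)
The plan is to exploit the canonical retraction $r \colon \cL_X \to (\hL(V), d)$ sending each $u_i, v_i$ to zero, together with its section $\iota$ (the inclusion of $\hL(V)$ as the first factor in the coproduct); both are $\cdgl$ morphisms satisfying $r \circ \iota = \id$, so the induced map $\widetilde{MC}(\iota) \colon \widetilde{MC}(\hL(V), d) \to \widetilde{MC}(\cL_X)$ is injective with left inverse $\widetilde{MC}(r)$. Everything therefore reduces to surjectivity: for every $z \in \MC(\cL_X)$, one must show that $z$ is equivalent in $\cL_X$ to $\iota(r(z))$.

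For surjectivity, I would set $z_V = r(z)$ and pass to the perturbed $\cdgl$ $(\cL_X, d_{z_V})$ with $d_{z_V} = d + \ad_{z_V}$. Here $w := z - z_V$ is a Maurer-Cartan element of $d_{z_V}$ lying in the closed ideal $\cK_1$ generated by the $u_i, v_i$, and by the standard compatibility of the gauge action with such perturbations, proving $w \sim 0$ in $(\cL_X, d_{z_V})$ is equivalent to proving $z \sim z_V$ in $(\cL_X, d)$. I would introduce the multiplicative decreasing differential filtration $\{\cK_k\}$, where $\cK_k$ is the closed ideal of elements containing at least $k$ letters among the $u_i, v_i$; one has $\cK_k \subseteq \L^{\geq k}(V \cup \{u_i, v_i\}_i)$. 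The key tool is the graded derivation $s$ of $\cL_X$ of degree $+1$ determined by $s(u_i) = v_i$, $s(v_i) = 0$ and $s|_V = 0$. Since $s(z_V) = 0$, a direct check gives $d_{z_V} s + s d_{z_V} = ds + sd$, and this operator acts on every Lie monomial as multiplication by the number of $u, v$-letters, in particular as multiplication by $k$ on the quotient $\cK_k / \cK_{k+1}$.

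I would then inductively construct Maurer-Cartan elements $(w_k)_{k \geq 1}$ of $(\cL_X, d_{z_V})$ with $w_1 = w$, $w_k \in \cK_k$, and $w_k \sim_{O(k)} w_{k+1}$. The candidate primitive $\xi_k := sw_k/k \in \cK_k$ satisfies $d_{z_V}\xi_k \equiv w_k \pmod{\cK_{k+1}}$, using $d_{z_V}w_k = -\tfrac{1}{2}[w_k, w_k] \in \cK_{2k}$. Feeding $\xi_k$ into the Lawrence--Sullivan machinery of \lemref{lem:MCr} --- taking $f \colon (\hL(a, c, y), d) \to (\cL_X, d_{z_V})$ with $f(a) = w_k$, $f(y) = -\xi_k$, $f(c) = -d_{z_V}\xi_k$, and composing with $\psi$ --- produces a path of order $k$ from $w_k$ to a new Maurer-Cartan element $w_{k+1}$ in $\cK_{k+1}$. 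Applying \lemref{lem:seqMC} in $(\cL_X, d_{z_V})$ then yields $w \sim_{O(1)} 0$ and hence $z \sim z_V$.

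The hard part is establishing the refinement $w_{k+1} \in \cK_{k+1}$, rather than merely $w_{k+1} \in \L^{\geq k+1}$ which is all that \lemref{lem:MCr} formally guarantees. I would verify this by scrutinizing the explicit formula $w_{k+1} = f\psi(b) = f(a) + f(c) + \sum_{n \geq 1} f((sd)^n(a))/n!$. The head $f(a) + f(c) = w_k - d_{z_V}\xi_k$ lies in $\cK_{k+1}$ by construction of $\xi_k$, and each $f((sd)^n(a))$ for $n \geq 1$ is a Lie bracket of factors drawn from $\{f(a), f(y), f(c)\} \subseteq \cK_k$ involving at least one bracket, hence belongs to $\cK_{2k} \subseteq \cK_{k+1}$ for $k \geq 1$. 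This sharpening of \lemref{lem:MCr} to the $\cK$-filtration is the only nonroutine ingredient in the argument.
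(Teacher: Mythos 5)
Your argument is correct, but it follows a genuinely different route from the paper: the paper disposes of this corollary in one line by invoking \cite[Proposition 2.4]{BFMT3}, a general statement from the companion paper which lets one discard the acyclic free factors $(\widehat{\L}(u_i,v_i),dv_i=u_i)$ at the level of $\widetilde{MC}$, whereas you reprove that fact from scratch inside the present paper's toolkit. Your reduction to surjectivity via the retraction $r$ and section $\iota$ is clean, and the substantive steps all check out: $w=z-r(z)$ is Maurer--Cartan for $d_{z_V}$ and lies in $\cK_1$; the filtration $\cK_k$ by the number of $u,v$-letters is multiplicative, closed, and preserved by $d$, $\ad_{z_V}$ and $s$; the identity $d_{z_V}s+sd_{z_V}=ds+sd$ holds because the graded commutator $[s,\ad_{z_V}]=\ad_{s(z_V)}$ vanishes; and the explicit formula $\psi(b)=e^{\ad_{-y}}(a)+\frac{e^{\ad_{-y}}-1}{\ad_{-y}}(c)$ indeed shows that $f\psi(b)=w_k-d_{z_V}\xi_k$ modulo $\cK_{2k}\subseteq\cK_{k+1}$, which is the sharpening of \lemref{lem:MCr} you correctly identify as the only nonroutine point (note also that \lemref{lem:seqMC} applies verbatim to $(\cL_X,d_{z_V})$, since $\cL_X$ is free complete on $V\oplus\bigoplus_i(\Q u_i\oplus\Q v_i)$ and the lemma only uses the word-length filtration). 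In effect you have transplanted the strategy the paper uses for the first bullet of the proof of the Theorem in \secref{sec:MCLX} --- an explicit contracting homotopy producing $\sim_{O(k)}$-equivalences, concluded by \lemref{lem:seqMC} --- to handle the $u_i,v_i$ factors. What the paper's citation buys is brevity and a statement of independent general interest (homotopy invariance of the Maurer--Cartan moduli under such simplifications); what your proof buys is self-containedness and an explicit gauge, at the cost of redoing, in this special case, the work encapsulated in \cite{BFMT3}.
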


\begin{proof} This follows directly from \cite[Proposition 2.4]{BFMT3}.
\end{proof}
\section{Maurer Cartan elements and connected components}\label{sec:MCLX}

\begin{proof}[Proof of the Theorem.] 
Let $X$ be a finite simplicial complex and denote by $X_i$ its connected components
for $i= 1, \dots , k$. Then
$$ {\cL}_X= \widehat{\amalg}_{i= 1}^k {\cL}_{X_i}\,.$$
For each $i=1,\ldots,k$ we have
 $${\cL}_{X_i}\cong (\widehat{\L}(  V(i),d)\widehat{\amalg} (\widehat{\L}(u_{ij}, v_{ij}),d),$$ 
 with 
$d(u_{ij})= v_{ij}$ and $V(i) = \Q a_i \oplus V(i)_{\geq 0}$ verifies the properties established in \propref{prop:cscmodel}.
Moreover, we deduce from Corollary~\ref{cor:a} 
$$\widetilde{MC}({\cL}_X) = \widetilde{MC}(\,\widehat{\amalg}_{i=1}^k (\widehat{\L}(  V(i)),d)\,)\,.$$
A Maurer-Cartan element $u\in \cL_{X}$ can be written in the form
$$u = \sum_{i=1}^k \lambda_i a_i + \mu\,,
$$
where $\mu$ is a decomposable element and $\lambda_i \in \Q$.
 From a short computation, we observe that all the numbers $\lambda_{i}$, except at most one,
 are equal to zero. 
 
 \smallskip
$\bullet$ {If $\lambda_1 \neq 0$, then $\lambda_1 = 1$} and we set $a=a_{1}$, $V=V(1)$ and $W=\oplus_{i\geq 2}V(i)$.
 We denote by  $E_r$ the  subvector space of $\cL_{X}$ 
 generated by the Lie words containing exactly $r$ elements of $V_{\geq 0}$.
The differential $d$ can be written as a series 
$d= \sum_{i\geq 1} d_i$, with $d_i(V) \subset E_i$. 
By hypothesis, we have $d_1(v) = -[a,v]$ if $v\in V_{\geq 0}$ and $d_{1}(w)=0$ if $w\in W$.
Remark now that since $a$ is in degree $-1$ and $V\oplus W$ is finite dimensional,  
the ideal $E_{\geq 1}$ generated by $V_{\geq 0}$ is the free complete \dgl~on the elements 
$a^r\boxtimes v_k := \ad_a^r(v_k)$ and 
$a^r\boxtimes w_k := \ad_a^r(w_k)$, where $r\geq 0$,  
the $v_k$'s run over a graded basis of $V_{\geq 0}$ and the $w_{k}$ over a graded basis of $W$. 
Recall $v\in V_{\geq 0}$ and $w\in W$.
A simple computation gives
$$d_1(a^r\boxtimes v) = 
\left\{\begin{array}{cl} -a^{r+1}\boxtimes v\,, \hspace{5mm}\mbox{} 
& \mbox{if $r$ is even,}\\
0\,,
&\mbox{if $r$ is odd,}
\end{array}
\right.$$
$$d_1(a^r\boxtimes w) = 
\left\{\begin{array}{cl}
0\,,
&\mbox{if $r$ is even,}\\
 -a^{r+1}\boxtimes w\,, \hspace{5mm}\mbox{} 
& \mbox{if $r$ is odd.}
\end{array}
\right.$$
The derivation defined by $\theta = -\ad_a-d_1$ verifies
$$\theta (a^r\boxtimes v) = \left\{ \begin{array}{cl} 0\,, \hspace{1cm} \mbox{} 
& \mbox{if $r$ is even,}\\
-a^{r+1}\boxtimes v\,, 
& \mbox{if $r$ is odd,}
\end{array}\right.$$
$$\theta (a^r\boxtimes w) = \left\{ \begin{array}{cl} 
-a^{r+1}\boxtimes w\,, \hspace{1cm} \mbox{} 
& \mbox{if $r$ is even,}\\
0\,, 
& \mbox{if $r$ is odd.}
\end{array}\right.$$
Clearly we have $\theta^2 = 0$ and $H(E_{\geq 1}, \theta) = \widehat{\L}(V)$. In particular
$$H_{-1}(E_{\geq 1}, \theta) = 0\,.$$
We construct a sequence of Maurer-Cartan elements $(u_n)$ such that 
$u_{1} = u$, $u_n-a\in E_{\geq n}$ and $u_n\sim_{O(n)} u_{n+1}$. 
Suppose $u_n$ has been constructed, then we can write it as
$$u_n = a+ \omega_n + \gamma\,, \quad 
 \text{with}\;\;\omega_n \in E_n\,, \hspace{2mm}\gamma \in E_{>n}\,.$$
Since $u_n$ is a Maurer-Cartan element, we have $d_1(\omega_n) = -[a,\omega_n]$ and $\theta (\omega_n)= 0$.
From  $H_{-1}(E_{\geq 1}, \theta) = 0$, we deduce the existence of $t\in E_n$ such that $\omega_n = \theta (t)$.
This implies 
$$\omega_n = -[a,t] -d_1(t)\,.$$
Recall from (\ref{equa:isoLS}) the morphism
$$\psi \colon (\widehat{\L}(a,b,x),d)\to (\widehat{\L}(a,e,c),d)$$ 
and  construct a morphism 
$\mu \colon (\widehat{\L}(a,e,c),d)\to (\widehat{\L}(\Q a\oplus V),d)$,
 by $\mu (a) = u_n$, $\mu(e) = t$ and $\mu (c) = dt$. 
 A short computation gives
$$\mu\circ \psi (b) = a+\gamma '\,, \hspace{1cm} \gamma'\in E_{>n}\,.$$ 
The path $\mu\circ \psi$ defines $u_{n+1}$ such  that $u_n\sim_{O(n)} u_{n+1}$
and the result follows  from \lemref{lem:seqMC}.

\smallskip
$\bullet$ {Suppose now $\lambda_{i}=0$ for $i=1,\ldots,k$.}
We write $u = \sum_{i\geq 1} \omega_i$ with $\omega_i \in E_i$. Since $u$ is a Maurer-Cartan element, we have
$d\omega_1= 0$. 
From $H({\mathcal L}_X,d)= 0$, we deduce the existence of $\omega_1'$ such that 
$\omega_1 = d\omega_1'$ and  \lemref{lem:MCr} implies $u\sim_{O(1)} u_2$ with $u_2\in E_{\geq 2}$. 
With the same process, we get a sequence of Maurer-Cartan elements $u_n\in E_{\geq n}$ such that 
$u_n\sim_{O(n)} u_{n+1}$. 
Finally \lemref{lem:seqMC} gives  $u\sim 0$.
 \end{proof}
 
\providecommand{\bysame}{\leavevmode\hbox to3em{\hrulefill}\thinspace}
\providecommand{\MR}{\relax\ifhmode\unskip\space\fi MR }
\providecommand{\MRhref}[2]{%
  \href{http://www.ams.org/mathscinet-getitem?mr=#1}{#2}
}
\providecommand{\href}[2]{#2}

\end{document}